\documentclass[11pt]{article}

\usepackage[a4paper,margin=1in]{geometry}
\usepackage{amsmath,amssymb,amsthm,mathtools}
\usepackage{array,booktabs,enumitem}
\usepackage[T1]{fontenc}
\usepackage{lmodern}
\usepackage{microtype}

\usepackage{hyperref}
\usepackage{tikz}
\usepackage{subcaption}
\hypersetup{
	colorlinks=true,
	linkcolor=blue,
	filecolor=magenta,
	urlcolor=cyan,
	citecolor=red
}
\newtheorem{definition}{Definition}[section]

\newtheorem{remark}[definition]{Remark}
\newtheorem{lemma}[definition]{Lemma}
\newtheorem{proposition}{Proposition}
\newtheorem{theorem}[proposition]{Theorem}

\newcommand{\Hcal}{\mathcal H}
\newcommand{\Spec}{\operatorname{Spec}}

\newcommand{\Z}{\mathbb Z}
\newcommand{\N}{\mathbb N}
\newcommand{\R}{\mathbb R}
\newcommand{\I}{\mathbf I}
\newcommand{\J}{\mathbf J}
\newcommand{\Cnk}{C_n^{[k]}}
\newcommand{\ZZ}{\mathbb Z}
\newcommand{\Q}{\mathbb Q}
\DeclareMathOperator{\Gal}{Gal}

\title{Solution to a conjecture on integral uniform hypercycles}

\author{Joyentanuj Das\thanks{Department of Mathematics, SRM Institute of Science and Technology Chennai, Tamil Nadu, India. Email: joyentanuj@gmail.com, joyentad@srmist.edu.in.}\  \and Iswar Mahato \thanks{Department of Mathematics, SRM University-AP, Andhra Pradesh 522240, India. Email: iswarmahato02@gmail.com, iswar.m@srmap.edu.in.}}

\date{}
\begin{document}
\maketitle

\begin{abstract}
A hypergraph is said to be integral if all of its adjacency eigenvalues are integers. Recently, Portugal and Del-Vecchio in 
[\emph{Appl. Math. Comput.} 504: 129507 (2025)] studied the integral hypergraphs and gave a characterization of integral hypercycles in three particular cases: $3$-uniform, $4$-uniform and $5$-uniform hypercycles. In the same article, they conjectured that the $k$-uniform hypercycle on $n$ vertices $\Cnk$ is never integral for $k<n-1$, when $n>6$. In this article, we confirm this conjecture and prove that for $2\le k\le n-1$, $\Cnk$ is integral if and only if $k=n-1$ or $(n,k)\in\{(4,2),(6,2),(6,3),(6,4)\}$. The proof begins with computing the complete adjacency spectrum of $\Cnk$ and then uses Niven's theorem, a cyclotomic-unit lemma, an elementary property of Euler's totient function, and the Galois symmetry of cyclotomic fields to complete it. Our result gives a complete characterization of $k$-uniform integral hypercycles on $n$ vertices. 
\end{abstract}

\medskip
\noindent\textbf{MSC:} 05C50, 05C65.

\noindent\textbf{Keywords:} Adjacency matrix; Integral hypergraph; Uniform hypercycle; Euler’s totient function; Galois symmetry; Cyclotomic field.

\section{Introduction}
A hypergraph $\Hcal=(V,E)$ is given by a vertex set $V$ and an edge set $E=\{e:e\subseteq V\}$, whose elements are called (hyper)edges. A hypergraph is simple if it has no loops (edges with $|e|=1$) and no repeated edges. Throughout this article, we consider only simple hypergraphs. The hypergraph $\Hcal$ is \emph{$k$-uniform} if $|e|=k$ for every edge $e\in E(\Hcal)$. Therefore, a graph is a $2$-uniform hypergraph. The degree of a vertex $v\in V$, denoted by $d(v)$, is the number of edges that contain $v$. A hypergraph is $r$-regular if $d(v)=r$ for all $v\in V$.

For $n\in\N$, let $\Z_n=\{0,1,\ldots,n-1\}$ be the ring of integers modulo $n$. A $k$-uniform hypercycle on $n$ vertices, denoted by $C_n^{[k]}$, is the hypergraph with 
vertex set $V=\Z_n$ and edge set
\[
 E=\{e_j:j\in V\},
 \qquad \text{where} \qquad e_j=\{j,j+1,\ldots,j+k-1\},
\]
where all additions are taken modulo $n$. Note that $C_n^{[k]}$ is $k$-regular and $|V|=|E|=n$.

Recently, the spectral theory of hypergraphs has emerged as an active area of research in algebraic and spectral graph theory. There are two principal approaches to studying the spectra of hypergraphs: one based on tensors (or hypermatrices) and the other based on matrices. The tensor-based approach was developed systematically by Cooper and Dutle~\cite{CooperDutle}, who introduced a spectral theory for uniform hypergraphs through the adjacency hypermatrix. On the other hand, matrix-based approaches associate suitable matrices with a hypergraph and investigate their eigenvalues to reveal structural properties. Classical matrix-based treatments of hypergraph spectra can be found in Feng and Li~\cite{FengLi}, while more recent developments and systematic studies of connectivity matrices and their spectra are given in Banerjee~\cite{Banerjee}. Further developments involving adjacency matrices and related spectral parameters can be found in Cardoso et al.~\cite{CardosoDelVecchioPortugalTrevisan}. In this paper, we adopt the matrix-based spectral framework for hypergraphs.

In 1974, Harary and Schwenk proposed the following interesting question: ``Which graphs have integral spectra?'' The problem is elementary to state but difficult in general, and it
has led to a substantial literature devoted to particular graph classes and to structural and algebraic techniques for controlling the spectrum; see, for example, the survey of Balinska et al.~\cite{Balinska} and the standard texts on graph spectra~\cite{CvetkovicDoobSachs}.

Very recently, Portugal and Del-Vecchio \cite{PortugalDelVecchio} considered the corresponding question for hypergraphs and studied integral hypergraphs--hypergraphs whose all adjacency eigenvalues are integers. They constructed infinite families of integral uniform hypergraphs and obtained the following result about integrality of hypercycles:

\begin{theorem}\label{thm:Portugal_main}
Let $C_n^{[k]}$ be the $k$-uniform hypercycle on $n$ vertices. Then
\begin{enumerate}
    \item For $k=3$ and $n>4$, the hypercycle $C_n^{[3]}$ is integral if and only if $n=6$.
    \item For $k=4$ and $n>5$, the hypercycle $C_n^{[4]}$ is integral if and only if $n=6$.
    \item For $k=5$ and $n>6$, the $5$-uniform hypercycle $C_n^{[5]}$ is never integral.
\end{enumerate}
\end{theorem}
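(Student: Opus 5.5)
The plan is to compute the spectrum of $\Cnk$ explicitly. Then apply Galois conjugation to the eigenvalue indexed by $j=1$, which reduces each of the three items to a finite check.

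\textbf{Spectrum.} I use the matrix framework in which the adjacency matrix $A$ of a hypergraph has $A_{uv}$ equal to the number of edges containing both $u$ and $v$ for $u\neq v$, and $A_{uu}=0$. Let $B$ be the $n\times n$ vertex--edge incidence matrix of $\Cnk$. Then $A=BB^{T}-kI$, since $\Cnk$ is $k$-regular. Here $B$ is circulant with first row $(1,\dots,1,0,\dots,0)$ containing $k$ ones, so $BB^{T}$ is circulant and normal. Put $\omega=e^{2\pi i/n}$ and $S_j=\sum_{t=0}^{k-1}\omega^{jt}$. The eigenvalues are
\[
\lambda_j=|S_j|^2-k,\qquad j\in\Z_n,
\]
so $\lambda_0=k^2-k$ and, for $j\neq 0$, $\lambda_j=\sin^2(\pi jk/n)/\sin^2(\pi j/n)-k$. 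Each $\lambda_j$ is a real algebraic integer in $\Q(\omega)$. Hence $\Cnk$ is integral if and only if every $\lambda_j$ is rational.

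\textbf{Reduction to a polynomial in $c=\cos\theta$.} Set $\theta=2\pi j/n$ and $c=\cos\theta$. Expanding $|S_j|^2$ for each fixed $k$ gives $|S_j|^2=p_k(c)$ with $p_k\in\Z[x]$ of degree $k-1$:
\[
p_3(x)=(1+2x)^2,\qquad p_4(x)=(2+2x)\cdot 4x^2,\qquad p_5(x)=(4x^2+2x-1)^2 .
\]
Take $j=1$. For each $a$ coprime to $n$, the automorphism $\sigma_a\colon\omega\mapsto\omega^a$ of $\Q(\omega)$ maps $\cos(2\pi/n)$ to $\cos(2\pi a/n)$. These images run over $\varphi(n)/2$ distinct values when $n>2$.

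Suppose $\lambda_1\in\Q$. Then $p_k(\cos(2\pi a/n))=p_k(\cos(2\pi/n))$ for every such $a$. So the nonzero polynomial $p_k(x)-\lambda_1-k$ of degree $k-1$ has at least $\varphi(n)/2$ roots, which forces $\varphi(n)\le 2(k-1)$. This is the elementary totient bound. For $k\in\{3,4,5\}$ it leaves only finitely many $n$: those with $\varphi(n)\le 4$ or $\varphi(n)\le 6$, and $\varphi(n)\le 8$ for $k=5$.

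\textbf{Finite check and the case $n=6$.} For each surviving $n$ in the stated ranges ($n>4$, $n>5$, $n>6$ respectively), I check directly that some $\lambda_j$ is irrational.
\begin{itemize}
\item For $n=5$ with $k=3$, $(1+2\cos(2\pi/5))^2=\bigl(\tfrac{1+\sqrt5}{2}\bigr)^2\notin\Q$.
\item For $n=8$, the value $c=\sqrt2/2$ typically gives irrational values.
\item For $n=10$ and $n=12$, one uses $\cos(\pi/5)$ and $\sqrt3/2$ respectively.
\item For $k=5$, the extra values $n\in\{7,9,14,15,16,18,20,24,30\}$ must also be tested.
\end{itemize}
In each case one exhibits a $j$ and a Galois conjugate that changes $p_k(\cos(2\pi j/n))$. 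By Niven's theorem, $\cos(2\pi j/n)$ is rational only when its reduced denominator lies in $\{1,2,3,4,6\}$. This settles many cases at once.

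The converse for $n=6$ is immediate. For $k=3$ and $k=4$, every $\cos(2\pi j/6)\in\{1,\tfrac12,-\tfrac12,-1\}$ is rational, so every $\lambda_j$ is an integer.

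\textbf{Main obstacle.} I expect the main obstacle to be the finite check for $k=5$, where several $n$ remain. The danger is an accidental coincidence: $p_5$ could take the same value on all conjugates because $p_5(x)$ factors through a polynomial of smaller degree in $x$. To handle this, I would first choose the most convenient $j$, possibly with $\gcd(j,n)>1$, so that $\cos(2\pi j/n)$ generates a small field. I would then compare numerically two explicit conjugates, $a=1$ and $a=\pm2$ or $\pm3$, to certify irrationality in each remaining case.
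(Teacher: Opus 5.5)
Your proposal is correct, and it takes a genuinely different route from the paper. The paper never proves this statement on its own. It quotes it from Portugal and Del-Vecchio and recovers it as the cases $k=3,4,5$ of Theorem~\ref{thm:classification-new}, whose proof is uniform in $k$:

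\begin{itemize}
\item it splits on $g=\gcd(n,k)$, using Lemma~\ref{lem:unit-new} when $g=1$ and Niven's theorem when $n=2k$;
\item when $\nu=n/g\ge3$, Lemma~\ref{lem:galois-new} makes the reduction $(\Z/n\Z)^\times\to(\Z/\nu\Z)^\times$ injective, so $\varphi(n)=\varphi(\nu)$;
\item then Lemma~\ref{lem:phi-new} gives $n=2\nu$, the unit lemma gives $k=\nu\pm1$, and Niven gives $n=6$.
\end{itemize}

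You instead fix $k$ and use only $j=1$. Since $|S_1|^2=p_k(\cos(2\pi/n))$ with $\deg p_k=k-1$, rationality forces the $\varphi(n)/2$ conjugates of $\cos(2\pi/n)$ to be roots of one polynomial of degree $k-1$. Hence $\varphi(n)\le 2(k-1)$. This is more elementary for small fixed $k$: it needs no unit lemma and no totient lemma. But the bound is vacuous once $k$ grows with $n$, so it cannot reach the conjecture. It also leaves a case check that grows with $k$, which the paper's structural argument avoids.

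Two remarks on that check. First, your enumeration for $k=4$ is incomplete. Since $\varphi(n)=6\le 2(k-1)$, the values $n\in\{7,9,14,18\}$ must be excluded for $k=4$ as well, not only for $k=5$.

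Second, all borderline cases $\varphi(n)=2(k-1)$ can be handled at once. Write $p_k(x)=k+2\sum_{m=1}^{k-1}(k-m)T_m(x)$, where $T_m(\cos\theta)=\cos m\theta$. Then the coefficients of $x^{k-1}$ and $x^{k-2}$ in $p_k$ both equal $2^{k-1}$. Suppose $p_k(\cos(2\pi/n))=r\in\Q$ and $\varphi(n)/2=k-1$. Then $p_k(x)-r$ is $2^{k-1}$ times the minimal polynomial of $\cos(2\pi/n)$, so the conjugates would sum to $-1$. In fact their sum is half the Ramanujan sum $c_n(1)$, i.e.\ half the M\"obius function of $n$, which lies in $\{0,\pm\tfrac12\}$. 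This disposes of:
\begin{itemize}
\item $n\in\{5,8,10,12\}$ for $k=3$;
\item $n\in\{7,9,14,18\}$ for $k=4$;
\item $n\in\{15,16,20,24,30\}$ for $k=5$.
\end{itemize}

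The remaining cases all fail already at $j=1$. For $k=4$ and $n\in\{8,10,12\}$ you get $|S_1|^2=4+2\sqrt2,\ 5+2\sqrt5,\ 6+3\sqrt3$. For $k=5$ the remaining values are $n\in\{7,8,9,10,12,14,18\}$. So the coincidence you feared from $p_5=(4x^2+2x-1)^2$ never actually occurs.
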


\noindent In the same article, the authors made the following conjecture: \emph{``The $k$-uniform hypercycle $\Cnk$ is never integral for $k<n-1$, when $n>6$''}.

In this article, we solve this conjecture affirmatively and prove the following result.

\begin{theorem}\label{thm:classification-new}
Let $n\ge3$ and $2\le k\le n-1$. Then $\Cnk$ is integral if and only if $k=n-1$ or $(n,k)\in\{(4,2),(6,2),(6,3),(6,4)\}.$
\end{theorem}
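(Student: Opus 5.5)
The plan is to compute the spectrum of $\Cnk$ explicitly and then decide integrality with algebraic number theory. I take the adjacency matrix in the matrix-based framework: for $u\neq v$, $A_{uv}$ is the number of edges containing both $u$ and $v$, and $A_{uu}=0$. With $B$ the $n\times n$ vertex–edge incidence matrix, this gives $A=BB^{T}-kI$, since $\Cnk$ is $k$-regular. Because $e_j=\{j,\dots,j+k-1\}$, $B$ is circulant with first row supported on $k$ consecutive positions. Put $\zeta=e^{2\pi i/n}$ and
\[
S_m=\sum_{t=0}^{k-1}\zeta^{mt}.
\]
Then $B$ has eigenvalues $S_m$ and $BB^T$ has eigenvalues $|S_m|^2$. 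This gives $\lambda_0=k^2-k$ and, for $m\neq0$,
\[
\lambda_m=f(m)-k,\qquad f(m)=\frac{\sin^2(\pi mk/n)}{\sin^2(\pi m/n)}=\frac{|1-\zeta^{mk}|^2}{|1-\zeta^{m}|^2}.
\]
So $\Cnk$ is integral if and only if $f(m)\in\Z$ for all $1\le m\le n-1$.

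The "if" direction is a direct check. For $k=n-1$, $S_m=-1$ for every $m\ne0$, so $\lambda_m=2-n$. For the four exceptional pairs $(4,2),(6,2),(6,3),(6,4)$, the values of $f(m)$ follow from $\sin^2$ at multiples of $\pi/6$ and $\pi/4$. I also record the symmetry $f$ is unchanged under $k\mapsto n-k$, so I may assume $k\le n/2$ wherever convenient.

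For the "only if" direction I work with $m=1$, where $f(1)$ is a totally positive algebraic integer in $\Q(\zeta)^+$. The Galois automorphisms $\sigma_a\colon\zeta\mapsto\zeta^a$ with $\gcd(a,n)=1$ send $f(1)$ to $f(a)$. Hence integrality forces
\[
f(a)=f(1)=:c\quad\text{for all } a\in(\Z/n\Z)^\times .
\]
Let $g=\gcd(k,n)$, $n'=n/g$ and $k'=k/g$.

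\emph{Case $g=1$.} Here $(1-\zeta^k)/(1-\zeta)$ is a cyclotomic unit, so $c$ is a positive rational integer that is a unit, i.e. $c=1$. Then $\sin(\pi k/n)=\pm\sin(\pi/n)$, so $k\equiv\pm1\pmod n$, which forces $k=n-1$ because $k\ge2$.

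\emph{Case $g>1$.} I take norms from $\Q(\zeta)$ to $\Q$. Since $c\in\Z$,
\[
c^{\varphi(n)}=\frac{N(1-\zeta^k)^2}{N(1-\zeta)^2}.
\]
Here $\zeta^k$ is a primitive $n'$-th root of unity. I use the standard facts that $N(1-\eta)=p^{\varphi(n)/\varphi(n')}$ when $\eta$ is a primitive $p^e$-th root of unity, that $N(1-\eta)=1$ when the order of $\eta$ is not a prime power, and that $N(1-\zeta)=p$ or $1$ according as $n=p^e$ or not. This gives the following subcases.
\begin{itemize}
\item If $n'$ is not a prime power, the numerator norm is $1$, so $c=1$.
\item If $n$ is not a prime power, the denominator norm is $1$, so $c^{\varphi(n)}=p^{2\varphi(n)/\varphi(n')}$ and $c=p^{2/\varphi(n')}$. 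Integrality of $c$ forces $\varphi(n')\in\{1,2\}$, i.e. $n'\in\{2,3,4,6\}$, and only the prime-power values $n'\in\{2,3,4\}$ occur here.
\item If $n=p^e$ and $n'=p^{e'}$, then $c^{\varphi(n)}=p^{2\varphi(n)/\varphi(n')-2}$. The exponent comparison together with the elementary totient identity $\varphi(p^e)/\varphi(p^{e'})=p^{e-e'}$ pins $c$ down, and it must be an integer.
\end{itemize}
In every subcase, $c=\sin^2(\pi k'/n')/\sin^2(\pi/n)$ is known, so $\sin^2(\pi/n)$ becomes a rational number determined by $n'$. Niven's theorem says $\sin^2(\pi r)\in\Q$ only when $\sin^2(\pi r)\in\{0,\tfrac14,\tfrac12,\tfrac34,1\}$, i.e. only when $n\le 6$. (When $c=1$, the same conclusion comes from comparing the Galois orbit $\{f(a)\}$: $f(a)=1$ for all units $a$ gives $\sin^2(\pi ak/n)=\sin^2(\pi a/n)$ for $a=1$, which already contradicts $g>1$ unless $n$ is small.) This leaves finitely many pairs with $n\le 6$ to check by hand, and they yield exactly $(4,2),(6,2),(6,3),(6,4)$.

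I expect the main obstacle to be the case $g>1$: organising the norm computation so that every combination of $(n,n')$ prime-power / non-prime-power is covered without gaps, and making sure the Niven step really bounds $n$ rather than only $n'$. If the norm bookkeeping becomes messy, the fallback is to use other values of $m$, chosen so that $\gcd(m,n)$ makes the denominator $1-\zeta^m$ a non-unit while the numerator $1-\zeta^{mk}$ is a unit. The ratio $f(m)$ then cannot be an algebraic integer, which directly rules out many $(n,k)$.
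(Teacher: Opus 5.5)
Your proof is correct. For the case $g>1$ it takes a genuinely different route from the paper.

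\textbf{How the paper handles $g>1$.} The paper never computes norms.
\begin{itemize}
\item For $\nu=n/g=2$ it applies Niven directly to $\mu_1=1/\sin^2(\pi/n)$.
\item For $\nu\ge3$ it uses the Galois invariance $\mu_a=\mu_1$ for units $a$, together with a comparison of $\sin^2(\pi j/n)$. This shows the reduction map $(\Z/n\Z)^\times\to(\Z/\nu\Z)^\times$ is injective, so $\varphi(n)=\varphi(\nu)$, so $n=2\nu$ with $\nu$ odd.
\item It then applies the cyclotomic-unit lemma to $\mu_2$ (with $\eta=\zeta^2$) to get $k=\nu\pm1$. This gives $\mu_1=\cot^2(\pi/n)$, and Niven yields $n=6$.
\end{itemize}

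\textbf{How your route differs.} You take $N_{\Q(\zeta)/\Q}$ of $\mu_1$ and use $N(1-\zeta_m)=\Phi_m(1)\in\{p,1\}$. This forces $\mu_1$ to be an explicit prime power, which bounds $n'$; Niven then bounds $n$. Your approach needs the standard but somewhat heavier norm evaluation. In exchange it uses only the single condition $\lambda_1\in\Z$, whereas the paper also needs $\lambda_2\in\Z$. The paper's route stays within the unit lemma, Galois symmetry and an elementary totient fact.

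\textbf{Points to tighten when you write it out.}
\begin{itemize}
\item \emph{Prime-power case.} Take $n=p^e$, $n'=p^{e'}$ with $e>e'\ge1$. Since $c$ is a positive integer with $c^{\varphi(n)}$ a power of $p$, you get $c=p^x$ with $x=\frac{2(p^{e-e'}-1)}{p^{e-e'}\varphi(n')}$. Because $\gcd(p^{e-e'},p^{e-e'}-1)=1$, this is an integer only when $p=2$, $e-e'=1$ and $\varphi(n')=1$. That gives $(n,k)=(4,2)$ with $c=2$; no Niven step is needed here.
\item \emph{The case $c=1$.} This is an outright contradiction, not just one for large $n$. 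The equation $\sin^2(\pi k/n)=\sin^2(\pi/n)$ forces $k\equiv\pm1\pmod n$, hence $g=1$.
\item \emph{Non-prime-power case.} For each $n'\in\{2,3,4\}$ you get $c=4,3,2$ respectively, and $\sin^2(\pi/n)=\tfrac14$ every time. So $n=6$, which rules out $n'=4$ and yields exactly $(6,3)$, $(6,2)$, $(6,4)$.
\item \emph{The case $k=n-1$.} Here $S_m=-\zeta^{-m}$ rather than $-1$. This is harmless, since only $|S_m|^2=1$ enters.
\end{itemize}
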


For the proof of Theorem \ref{thm:classification-new}, we first compute the explicit spectrum of the adjacency matrix of $\Cnk$ and then use Niven's theorem, a cyclotomic-unit lemma, an elementary property of Euler's totient function, and the Galois symmetry of cyclotomic fields to complete it. The proof is presented in section 3.

\section{Preliminaries}

In this section, we collect some basic definitions and discuss some preliminary results which will be used to prove our main results. First, let us give the definition of the adjacency matrix of a hypergraph. 

\begin{definition}
  Let $\Hcal$ be a hypergraph on $n$ vertices. Then the adjacency matrix of $\Hcal$, denoted by $A(\Hcal)$, is the $n\times n$ symmetric matrix with 
\[
 a_{ij}=\bigl|\{e\in E(\Hcal):v_i,v_j\in e\}\bigr|.
\]  
\end{definition}

If $\lambda_1,\lambda_2,\ldots,\lambda_t$ are all the distinct eigenvalues of $A(\Hcal)$, then the spectrum of $\Hcal$ is given by
\[\Spec(\Hcal)
 =\{(\lambda_1)^{m_1},(\lambda_2)^{m_2},\ldots,
          (\lambda_t)^{m_t}\},\]
where $m_1,\ldots,m_t$ are the corresponding multiplicities. Now, we recall the definition of a circulant matrix.

\begin{definition}
A circulant matrix is a square matrix in which all row vectors are composed of the same elements, and each row vector is rotated one element to the right relative to the previous row vector.
\end{definition}

A circulant matrix is specified by its first row. If
$[0,a_1,a_2,\ldots,a_{n-1}]$ is the first row, the its eigenvalues are
\[\lambda_j=\sum_{k=1}^{n-1}a_k\omega^{kj}
 \quad \text{for} \quad j=0,\ldots,n-1,
~~ \text{where} ~~
 \omega=\exp\left(\frac{2\pi i}{n}\right).\]

\noindent Note that the adjacency matrix of $C_n^{[k]}$ is circulant.

We will use a few standard facts from elementary number theory and cyclotomic fields. We record them here in the precise forms needed below. Let $\varphi$ denote Euler\'s totient function. If $\eta$ is a primitive $m$-th root of unity, then $\Q(\eta)$ denotes the $m$-th cyclotomic field and $\ZZ[\eta]$ be the ring generated over $\ZZ$ by $\eta$. We shall use the standard description
$$
\Gal(\Q(\eta)/\Q)\cong(\Z/m\Z)^\times,
$$
with $t\in(\Z/m\Z)^\times$ acting by $\eta\mapsto\eta^t$; see Washington~\cite{Washington} for the notations and terminologies.

\begin{lemma}[Niven's theorem]\label{lem:niven-new}
Let $\theta\in\R$ be a rational multiple of $\pi$. If $\cos\theta\in\Q$, then
$$ \cos\theta\in \left\{ -1,-\frac12,0,\frac12,1\right\}.$$
\end{lemma}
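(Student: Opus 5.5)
The statement to prove is Niven's theorem (Lemma~\ref{lem:niven-new}). The plan is to reduce it to an integrality statement about algebraic integers.

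Write $\theta=2\pi a/b$ with $a,b$ integers, and put $z=e^{i\theta}$. Then $z$ is a root of unity, so it is a root of the monic integer polynomial $X^b-1$, and hence an algebraic integer. The same holds for $z^{-1}=\bar z$. Therefore $2\cos\theta=z+z^{-1}$ is an algebraic integer, being a sum of algebraic integers.

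If $\cos\theta\in\Q$, then $2\cos\theta$ is a rational algebraic integer. A rational root $p/q$ (in lowest terms) of a monic integer polynomial must have $q\mid 1$ by the rational root theorem, so $2\cos\theta\in\Z$.

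Since $|2\cos\theta|\le 2$, the only possibilities are $2\cos\theta\in\{-2,-1,0,1,2\}$. This gives $\cos\theta\in\{-1,-\tfrac12,0,\tfrac12,1\}$, as claimed.

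If one prefers to avoid the closure of algebraic integers under addition, there is a self-contained alternative. Use the Chebyshev-type identity $2\cos(m\theta)=P_m(2\cos\theta)$, where $P_m$ is monic with integer coefficients; it follows from the recursion $P_{m+1}(x)=xP_m(x)-P_{m-1}(x)$ with $P_0=2$ and $P_1=x$. Taking $m=b$ gives $P_b(2\cos\theta)=2\cos(2\pi a)=2$. So $2\cos\theta$ is a root of the monic integer polynomial $P_b(x)-2$, and the rational root theorem applies as above.

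The only step that needs care is justifying that $2\cos\theta$ is an algebraic integer. The Chebyshev recursion settles it elementarily, and everything else is immediate.
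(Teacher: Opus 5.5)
Your proof is correct. It differs from the paper mainly in that the paper gives no proof at all: it states Lemma~\ref{lem:niven-new} as the cosine form of Niven's theorem and refers to Niven for it. Your argument supplies a complete proof along classical lines:
\begin{itemize}
\item $2\cos\theta=z+z^{-1}$ is an algebraic integer;
\item a rational algebraic integer lies in $\Z$;
\item $|2\cos\theta|\le 2$ then leaves only the five listed values.
\end{itemize}

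Your proof needs no tools beyond those the paper already uses. Since $z+z^{-1}\in\Z[z]$ with $z$ a root of unity, your key step is exactly the fact $\Z[\eta]\cap\Q=\Z$ that the paper invokes in the proof of Lemma~\ref{lem:unit-new}. So your argument could be inserted into the preliminaries with essentially one line of justification.

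Your Chebyshev variant is the classical elementary argument, and it avoids even the closure of algebraic integers under addition. One small point to tidy there: $P_0=2$ is not monic, so the monicity claim should be stated for $m\ge1$. You should also take $b\ge1$, which you can always arrange, so that $P_b(x)-2$ is monic of positive degree.

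The paper's citation keeps the preliminaries short, which is reasonable for so classical a result. Your version makes the note fully self-contained at a cost of a few lines.
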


\begin{remark}
Lemma~\ref{lem:niven-new} is the cosine form of Niven's theorem; see Niven~\cite{Niven}. Equivalently, if $\frac{\theta}{\pi}\in\Q$ and $\cos\theta$ is rational, then the only possible rational values are $-1,-\frac12,0,\frac12,1.$
\end{remark}

\begin{lemma}\label{lem:unit-new}
For $m\ge2$, let $\eta$ be a primitive $m$-th root of unity, and let $r\in\Z$ satisfy $\gcd(r,m)=1.$ If
$$\left|\frac{1-\eta^r}{1-\eta}\right|^2\in\Q,$$ then $$ r\equiv\pm1\pmod m.$$
\end{lemma}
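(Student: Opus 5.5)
The idea is to express the quantity as an element of the cyclotomic field $\Q(\eta)$ and exploit Galois invariance. A rational number is fixed by every element of $\Gal(\Q(\eta)/\Q)$. Applying the automorphism that inverts $r$ will then force the quantity to equal its own reciprocal.

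First, since $\gcd(r,m)=1$ and $m\ge2$, we have $\eta^r\neq1$, so the quantity is a nonzero real number. Using $\bar\eta=\eta^{-1}$, I would rewrite it as
$$
c:=\left|\frac{1-\eta^r}{1-\eta}\right|^2=\frac{(1-\eta^r)(1-\eta^{-r})}{(1-\eta)(1-\eta^{-1})}=\frac{2-\eta^r-\eta^{-r}}{2-\eta-\eta^{-1}}\in\Q(\eta).
$$
For a primitive $m$-th root of unity $\eta'$ one has $2-\eta'^{\,s}-\eta'^{-s}=4\sin^2(\pi s a/m)$ for a suitable $a$ coprime to $m$. Hence for any such $\eta'$, the corresponding expression is a ratio of two positive reals. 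In particular $c>0$.

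Next, take $t\in(\Z/m\Z)^\times$ with $rt\equiv1\pmod m$, which exists because $\gcd(r,m)=1$. Let $\sigma_t$ be the automorphism $\eta\mapsto\eta^t$. Since $c\in\Q$, we get $\sigma_t(c)=c$. Computing directly,
$$
\sigma_t(c)=\frac{2-\eta^{rt}-\eta^{-rt}}{2-\eta^{t}-\eta^{-t}}=\frac{2-\eta-\eta^{-1}}{2-\eta^{r'}-\eta^{-r'}},
$$
where $r'=t$ satisfies $r'\equiv r^{-1}$. This is not yet visibly $1/c$. To get it, I would also apply $\sigma_r$ to $c$, which gives
$$
\sigma_r(c)=\frac{2-\eta^{r^2}-\eta^{-r^2}}{2-\eta^{r}-\eta^{-r}}.
$$
Rather than chase this, the cleanest route is to use $\sigma_t$ on the identity $c\,(2-\eta-\eta^{-1})=2-\eta^r-\eta^{-r}$. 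This yields $c\,(2-\eta^t-\eta^{-t})=2-\eta-\eta^{-1}$. Now note that $\eta^t$ is itself a primitive $m$-th root of unity $\zeta$, with $\zeta^r=\eta$. The original identity for $\zeta$ in place of $\eta$ is exactly $c_\zeta(2-\zeta-\zeta^{-1})=2-\zeta^r-\zeta^{-r}$, and we have shown $c_\zeta=1/c$. But $c$ is rational, hence fixed under the automorphism $\eta\mapsto\zeta$, so $c_\zeta=\sigma_t(c)=c$. Therefore $c=1/c$, and since $c>0$, we conclude $c=1$.

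Finally, $c=1$ means $2-\eta^r-\eta^{-r}=2-\eta-\eta^{-1}$. Writing $\eta=e^{2\pi i a/m}$, this says $\cos(2\pi ra/m)=\cos(2\pi a/m)$. Hence $ra\equiv\pm a\pmod m$, and since $a$ is invertible modulo $m$, $r\equiv\pm1\pmod m$.

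The main point of care is bookkeeping the Galois action consistently. One must check that $\sigma_t(c)$ really equals the same ratio formed with the root $\zeta=\eta^t$, and that this ratio is the reciprocal of $c$. This follows from $\zeta^r=\eta$, and it is the step where invertibility of $r$ modulo $m$ is used. Positivity of $c$ is what rules out $c=-1$.
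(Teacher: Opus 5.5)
Your proposal has a genuine gap at its central step, the claim ``we have shown $c_\zeta=1/c$.'' Write $c(s)=|1-\eta^s|^2/|1-\eta|^2$, so $c=c(r)$. With $t\equiv r^{-1}\pmod m$ and $\zeta=\eta^t$, apply $\sigma_t$ to $c\,(2-\eta-\eta^{-1})=2-\eta^r-\eta^{-r}$ and use $\sigma_t(c)=c$. This gives $c\,(2-\zeta-\zeta^{-1})=2-\zeta^r-\zeta^{-r}$, because $\eta=\zeta^r$. Compared with the defining identity for $c_\zeta$, this says $c_\zeta=c$, not $c_\zeta=1/c$. It only restates $\sigma_t(c)=c_\zeta$ together with $\sigma_t(c)=c$.

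Read in terms of $\eta$, the same identity says $c=|1-\eta|^2/|1-\eta^t|^2=1/c(t)$, that is, $c(r)\,c(r^{-1})=1$. To get $c=1/c$ you would need $c(r^{-1})=c(r)$. That is not a formal identity: for $m=7$, $r=2$, $t=4$ the two values are about $3.25$ and $5.05$. Given $c(r)\,c(r^{-1})=1$, it is equivalent to the conclusion $c=1$ itself. So ``therefore $c=1/c$'' is circular.

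The route you abandoned with $\sigma_r$ does work. Since $c\in\Q$, we have $\sigma_s(c)=c$ for every $s$ coprime to $m$, and $\sigma_s(c)=|1-\eta^{rs}|^2/|1-\eta^s|^2$. Taking $s=r^j$ gives $|1-\eta^{r^{j+1}}|^2=c\,|1-\eta^{r^j}|^2$. Telescoping over $j=0,\dots,k-1$, where $k$ is the order of $r$ modulo $m$, yields $|1-\eta|^2=c^k|1-\eta|^2$. Hence $c^k=1$, and $c=1$ by positivity; your last paragraph then finishes the proof. Equivalently, multiply $\sigma_s(c)=c$ over all $s\in(\Z/m\Z)^\times$ to get $c^{\varphi(m)}=1$.

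This repaired argument differs from the paper's. The paper observes that $\beta=(1-\eta^r)/(1-\eta)$ and $\beta^{-1}$ both lie in $\Z[\eta]$. So $c=\beta\overline{\beta}$ is a rational algebraic-integer unit, hence $\pm1$, hence $1$. The Galois-only version avoids any integrality or unit considerations and uses just the Galois action and the finiteness of the order of $r$.
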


\begin{proof}
Set $$\beta=\frac{1-\eta^r}{1-\eta}=1+\eta+\eta^2+\cdots+\eta^{r-1}.$$ Therefore, $\beta\in\Z[\eta].$ Since $\gcd(r,m)=1$, there exists an integer $s$ such that $rs\equiv1\pmod m.$ Hence, $\eta^{rs}=\eta$ and 
$$ \beta^{-1}=\frac{1-\eta}{1-\eta^r}=\frac{1-(\eta^r)^s}{1-\eta^r} \in\Z[\eta].$$ 
Therefore, $\beta$ is a unit in $\Z[\eta]$. Since the  complex conjugate $\overline{\beta}$ of $\beta$ is also a unit in $\Z[\eta]$, 
$$\gamma:=\beta\overline{\beta} =\left|\frac{1-\eta^r}{1-\eta}\right|^2 $$
is also a unit of $\Z[\eta]$. By hypothesis, $\gamma\in\Q$. Since $\gamma$ is also an algebraic integer, we have $$\gamma\in\Z[\eta]\cap\Q=\Z.$$ Moreover, $\gamma$ is a unit, so its inverse is also an algebraic integer. Since $\gamma^{-1}\in\Q$, the same argument gives $\gamma^{-1}\in\Z.$ Thus, $\gamma$ is a positive rational integer whose inverse is also an integer. Consequently, $\gamma=1.$ Therefore, $|1-\eta^r|=|1-\eta|.$

Let $\eta=e^{2\pi i s/m},$ where $\gcd(s,m)=1.$ Then $|1-\eta|=2\left|\sin\frac{\pi s}{m}\right|$ and $|1-\eta^r|=2\left|\sin\frac{\pi rs}{m}\right|.$ Hence, $$\left|\sin\frac{\pi rs}{m}\right| =\left|\sin\frac{\pi s}{m}\right|.$$ 
Using the relation
$$\sin^2 x=\sin^2 y\quad\Longleftrightarrow\quad x\equiv\pm y\pmod{\pi}, $$ we obtain
$$
\frac{\pi rs}{m}
\equiv
\pm\frac{\pi s}{m}
\pmod{\pi}.
$$
Therefore, $rs\equiv\pm s\pmod m.$ This implies that $r\equiv\pm1\pmod m$  as $\gcd(s,m)=1$. This completes the proof.
\end{proof}

\begin{remark}
The fact that $\frac{1-\eta^r}{1-\eta}$ is a cyclotomic unit when $\gcd(r,m)=1$ is standard in the theory of cyclotomic fields; see, for example, Washington~\cite{Washington}. The lemma above is the particular consequence of this standard fact that will be needed in the proof of the main classification theorem.
\end{remark}

\begin{lemma}\label{lem:phi-new}
Let $\nu,n\in\N$ satisfy $\nu\mid n$ and $\nu<n.$ If $\varphi(\nu)=\varphi(n),$ then $n=2\nu$ and $\nu$ is odd.
\end{lemma}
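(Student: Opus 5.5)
We prove the statement using the product formula for $\varphi$ over prime factorizations. Write $n=\nu d$ with $d=n/\nu\ge 2$. The basic identity is
\[
\frac{\varphi(n)}{n}=\prod_{p\mid n}\Bigl(1-\frac1p\Bigr),\qquad \frac{\varphi(\nu)}{\nu}=\prod_{p\mid \nu}\Bigl(1-\frac1p\Bigr).
\]
Dividing the first by the second gives
\[
\frac{\varphi(n)}{\varphi(\nu)}=d\prod_{p\mid n,\ p\nmid \nu}\Bigl(1-\frac1p\Bigr)=d\prod_{p\mid n,\ p\nmid\nu}\frac{p-1}{p}.
\]
The hypothesis $\varphi(\nu)=\varphi(n)$ forces this quantity to equal $1$.

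Let $P$ be the set of primes dividing $n$ but not $\nu$. Each $p\in P$ divides $d$, so $D:=\prod_{p\in P}p$ divides $d$. We therefore get
\[
d\prod_{p\in P}(p-1)=D, \qquad\text{so}\qquad \frac{d}{D}\prod_{p\in P}(p-1)=1.
\]
Here $d/D$ is a positive integer and every factor $p-1$ is a positive integer. Hence each of them must equal $1$.

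We now read off the consequences. First, $d=D$. Second, $p-1=1$ for every $p\in P$, so $P\subseteq\{2\}$.

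The set $P$ cannot be empty. Otherwise $D=1$, so $d=1$, which contradicts $\nu<n$.

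Therefore $P=\{2\}$ and $d=2$. That is, $n=2\nu$, and the condition $2\notin$ primes of $\nu$ (from $2\in P$) says exactly that $\nu$ is odd.

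No step here is really an obstacle. The only point needing care is confirming that $D\mid d$, so that $d/D$ is an integer. This holds because each $p\in P$ divides $n=\nu d$ but not $\nu$, so $p$ divides $d$; and the primes in $P$ are distinct.
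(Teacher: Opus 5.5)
Your proof is correct and is essentially the paper's argument. Both write $\varphi(n)/\varphi(\nu)$ as a product of positive integers that must each equal $1$. Your $d/D$ is exactly the paper's $\prod_{b_p>0}p^{a_p-b_p}\prod_{b_p=0<a_p}p^{a_p-1}$, and your $\prod_{p\in P}(p-1)$ is its remaining factor; the nonemptiness of $P$ from $\nu<n$ is handled the same way.
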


\begin{proof}
Let the prime factorizations of $n$ and $\nu$ be
$$
n=\prod_p p^{a_p},
\qquad
\nu=\prod_p p^{b_p},
$$
where $0\le b_p\le a_p$ for every prime $p$, because $\nu\mid n$. Recall that
$$\varphi(n)=\prod_{p:\,a_p>0}p^{a_p-1}(p-1),$$ and similarly for $\varphi(\nu)$. Hence,
$$
\frac{\varphi(n)}{\varphi(\nu)}
=
\prod_{p:\,b_p>0}p^{a_p-b_p}
\prod_{p:\,b_p=0<a_p}p^{a_p-1}(p-1).
$$
Observe that every factor in the product is a positive integer. Since $\frac{\varphi(n)}{\varphi(\nu)}=1$, every factor
must be equal to $1$. If $b_p>0$, then $p^{a_p-b_p}=1,$ and hence $a_p=b_p.$ Thus, no prime dividing $\nu$ can occur in $n$ with a larger exponent. Now suppose that $b_p=0<a_p.$ Then $p^{a_p-1}(p-1)=1.$ Since both factors are positive integers, we must have $p-1=1$ and $p^{a_p-1}=1.$ Thus, $p=2$ and $a_p=1.$ Since $\nu<n$, at least one prime occurs in $n$ but not in $\nu$. The preceding argument shows that this prime must be $2$, and it occurs only to the first power. Hence, $n=2\nu.$ Since the prime $2$ does not divide $\nu$, we conclude that $\nu$ is odd.
\end{proof}

\begin{lemma}\label{lem:galois-new}
Let $\zeta=e^{2\pi i/n}, \alpha_j =\sum_{m=0}^{k-1}\zeta^{mj},$ and $\mu_j=\alpha_j\overline{\alpha_j}$ for $j=1,\ldots,n-1$.
If $\gcd(j,n)=\gcd(j',n)$ and $\mu_j\in\Q,$ then $\mu_{j'}=\mu_j.$
\end{lemma}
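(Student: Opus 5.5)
The plan is to realize $\mu_{j'}$ as the image of $\mu_j$ under a Galois automorphism of $\Q(\zeta)$. A rational number is fixed by every such automorphism, so this will force $\mu_{j'}=\mu_j$.

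First, set $d=\gcd(j,n)=\gcd(j',n)$. Write $j=d a$ and $j'=d a'$ with $\gcd(a,n/d)=\gcd(a',n/d)=1$. Choose an integer $r$ with $r a\equiv a' \pmod{n/d}$; this is possible since $a$ is invertible mod $n/d$. Then $rj\equiv j'\pmod n$.

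Second, lift $r$ to a unit modulo $n$. The reduction map $(\Z/n\Z)^\times\to(\Z/(n/d)\Z)^\times$ is surjective, so there is some $t$ with $\gcd(t,n)=1$ and $t\equiv r\pmod{n/d}$. If I wanted to avoid quoting this surjectivity, I would add to $r$ a suitable multiple of $n/d$ chosen via the Chinese remainder theorem on the primes dividing $n$ but not $n/d$. With this $t$ we still have $tj\equiv j'\pmod n$, because $t j = t d a \equiv r d a \pmod{n}$.

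Third, let $\sigma_t\in\Gal(\Q(\zeta)/\Q)$ be the automorphism $\zeta\mapsto\zeta^t$. Then
\[
\sigma_t(\alpha_j)=\sum_{m=0}^{k-1}\zeta^{mjt}=\sum_{m=0}^{k-1}\zeta^{mj'}=\alpha_{j'} .
\]
Complex conjugation acts on $\Q(\zeta)$ as $\zeta\mapsto\zeta^{-1}$, and the Galois group is abelian, so $\sigma_t$ commutes with conjugation. Hence $\sigma_t(\overline{\alpha_j})=\overline{\alpha_{j'}}$. Since $\mu_j=\alpha_j\overline{\alpha_j}$ is a polynomial in $\zeta$ with integer coefficients, applying $\sigma_t$ gives $\sigma_t(\mu_j)=\mu_{j'}$.

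Finally, $\mu_j\in\Q$ is fixed by $\sigma_t$, so $\mu_{j'}=\mu_j$. The only step needing care is the lifting of $r$ to a unit modulo $n$; the rest is formal.
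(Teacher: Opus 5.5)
Your proposal is correct and is essentially the paper's proof: choose $t$ coprime to $n$ with $tj\equiv j'\pmod n$, apply $\sigma_t$ to get $\sigma_t(\mu_j)=\mu_{j'}$ (using that $\sigma_t$ commutes with complex conjugation), and conclude from $\mu_j\in\Q$. Your explicit construction of $t$ (solve $ra\equiv a'\pmod{n/d}$, then lift $r$ to a unit modulo $n$) fills in the step that the paper covers only with the word ``Equivalently''.
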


\begin{proof}
Let $d=\gcd(j,n)=\gcd(j',n).$ Then the residues $\dfrac{j}{d}$ and $\dfrac{j'}{d}$ are both relatively prime to $\dfrac{n}{d}.$ Equivalently, there exists an integer $t$ satisfying $\gcd(t,n)=1$ and $j'\equiv tj\pmod n.$ We now use the standard description of the Galois group of the cyclotomic field: $\operatorname{Gal}(\Q(\zeta)/\Q) \cong (\Z/n\Z)^\times,$ where the automorphism corresponding to $t$ is determined by $\sigma_t(\zeta)=\zeta^t$ (for reference see Washington~\cite{Washington}). Applying $\sigma_t$ to $\alpha_j$, we obtain $$\sigma_t(\alpha_j) = \sigma_t\left(\sum_{m=0}^{k-1}\zeta^{mj}\right) =\sum_{m=0}^{k-1}\sigma_t(\zeta)^{mj} =\sum_{m=0}^{k-1}\zeta^{tmj} =\sum_{m=0}^{k-1}\zeta^{mj'} =\alpha_{j'}. $$ The last equality follows from $j'\equiv tj\pmod n.$

Since $\sigma_t$ fixes the rational numbers and commutes with complex conjugation, we also have $\sigma_t(\overline{\alpha_j}) =\overline{\sigma_t(\alpha_j)}=\overline{\alpha_{j'}}.$ Consequently, $\sigma_t(\mu_j)=\sigma_t(\alpha_j\overline{\alpha_j})=\alpha_{j'}\overline{\alpha_{j'}}=\mu_{j'}.$ Since $\mu_j\in\Q$, every automorphism of $\Q(\zeta)/\Q$ fixes every rational number and hence $\sigma_t(\mu_j)=\mu_j.$ Thus, $\mu_{j'}=\mu_j.$
\end{proof}

\begin{remark}
Lemma~\ref{lem:galois-new} is a direct application of the standard Galois action on a cyclotomic field. Its role is to show that, whenever one of the quantities $\mu_j$ is rational, the values $\mu_j$ depend only on $\gcd(j,n)$.
\end{remark}

\section{Main results}
In this section, we give the proof of Theorem \ref{thm:classification-new}. For this, first we compute the eigenvalues of $\Cnk$ for $2\le k\le n-1$. Throughout this section, we define $\zeta=e^{2\pi i/n}.$

\begin{theorem}\label{thm:spectrum-new}
Let $\Cnk$ be the $k$-uniform hypercycle on $n$ vertices with $2\le k\le n-1$. If $\lambda_0,\lambda_1,\hdots, \lambda_{n-1}$ are the  eigenvalues of $A(\Cnk)$, then 
$$
\lambda_j
=
\left|\sum_{m=0}^{k-1}\zeta^{mj}\right|^2-k
=
\frac{\sin^2(\pi kj/n)}{\sin^2(\pi j/n)}-k,
\qquad j=1,\ldots,n-1,
$$
while $\lambda_0=k(k-1).$
\end{theorem}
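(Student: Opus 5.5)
The plan is to write $A(\Cnk)$ as $B B^{T}$ minus a diagonal term, where $B$ is the vertex--edge incidence matrix. This exposes the circulant structure directly, so the eigenvalues can be read off without computing the first row entry by entry.

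Let $B$ be the $n\times n$ incidence matrix with $B_{v,j}=1$ exactly when $v\in e_j$. For $u\neq v$, the entry $(BB^T)_{uv}$ counts the edges containing both $u$ and $v$, which is $a_{uv}$. On the diagonal, $(BB^T)_{vv}=d(v)=k$. Hence
$$A(\Cnk)=BB^T-kI.$$
Index the edges by their starting point $j$. Then $B_{v,j}=1$ if and only if $v-j\in\{0,1,\ldots,k-1\}\pmod n$. So $B$ is the circulant matrix $\sum_{m=0}^{k-1}P^m$, where $P$ is the cyclic shift permutation matrix with $P\,e_j=e_{j+1}$.

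Since $P$ is unitarily diagonalized by the Fourier vectors $x_j=(\zeta^{vj})_{v\in\Z_n}$, we have $Px_j=\zeta^{-j}x_j$, or $\zeta^{j}$ depending on the convention. In either case
$$Bx_j=\overline{\alpha_j}\,x_j \quad(\text{or } \alpha_j x_j), \qquad \alpha_j=\sum_{m=0}^{k-1}\zeta^{mj}.$$
The matrix $B^T=\sum_m P^{-m}$ is also diagonal in the same basis, with the conjugate eigenvalue. Therefore $BB^Tx_j=|\alpha_j|^2x_j$, and
$$\lambda_j=|\alpha_j|^2-k,\qquad j=0,\ldots,n-1.$$
Because the $x_j$ form a basis, this gives the full spectrum with multiplicities. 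Equivalently, one can note that $A$ is symmetric circulant and apply the circulant eigenvalue formula recalled in the preliminaries. The key point is that the polynomial $\sum_{v}a_{0v}t^v$ factors as $f(t)f(t^{-1})-k$ with $f(t)=\sum_{m<k}t^m$, which is just the matrix identity above read off on the first row.

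Next, evaluate the two cases. For $j=0$ we have $\alpha_0=k$, so $\lambda_0=k^2-k=k(k-1)$, in agreement with $k$-regularity. This value is also the row sum, since each vertex lies in $k$ edges and each such edge contributes $k-1$ to the row.

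For $1\le j\le n-1$ we have $\zeta^j\neq1$, so the geometric sum gives $\alpha_j=\frac{1-\zeta^{kj}}{1-\zeta^j}$. Using $|1-e^{i\theta}|=2|\sin(\theta/2)|$, we get
$$|\alpha_j|^2=\frac{\sin^2(\pi kj/n)}{\sin^2(\pi j/n)},$$
where the denominator is nonzero since $0<j<n$.

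I expect no real obstacle. The only care needed is the sign convention in the shift eigenvalues ($\zeta^{j}$ versus $\zeta^{-j}$), and this is harmless because only $|\alpha_j|^2$ enters the formula. The hypothesis $k\le n-1$ ensures the edges $e_j$ are distinct $k$-sets, so the counting $a_{uv}=(BB^T)_{uv}$ involves no repeated edges. Even for $k=n-1$, where some vertex pairs lie in several edges, the counting still holds.
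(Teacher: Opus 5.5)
Your proof is correct and follows essentially the same route as the paper. Both write $A(\Cnk)=BB^{\mathsf T}-kI$ with $B$ the circulant vertex--edge incidence matrix, diagonalize $B$ and $B^{\mathsf T}$ together by the Fourier vectors to get eigenvalues $|\alpha_j|^2-k$, and simplify with the geometric series and $|1-e^{i\theta}|^2=4\sin^2(\theta/2)$; your $B=\sum_{m=0}^{k-1}P^m$ is just a compact form of the circulance the paper checks entrywise via $m_{v,t}=c_{t-v}$.
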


\begin{proof}
Let $V=\Z_n=\{0,1,\ldots,n-1\},$ and recall that
$$
E(\Cnk)=\{e_t:t\in\Z_n\},
\qquad
e_t=\{t,t+1,\ldots,t+k-1\},
$$
here all additions are taken modulo $n$. We shall obtain the spectrum of the adjacency matrix $A(\Cnk)$ by passing first to the vertex--edge incidence matrix of $\Cnk$.

Let $M=(m_{v,t})_{v,t\in\Z_n}$ be the vertex--edge incidence matrix of $\Cnk$, defined by
$$
m_{v,t}
=
\begin{cases}
1,&v\in e_t,\\
0,&v\notin e_t.
\end{cases}
$$
Since $2\le k\le n-1$, the edges $e_0,e_1,\ldots,e_{n-1}$ are pairwise distinct. 

We first determine the matrix $MM^{\mathsf T}$. For distinct vertices $u,v\in\Z_n$, the $(u,v)$-entry of $MM^{\mathsf T}$ is
$$
(MM^{\mathsf T})_{uv}
=
\sum_{t\in\Z_n}m_{u,t}m_{v,t}.
$$
The summand $m_{u,t}m_{v,t}$ is equal to $1$ exactly when both $u$ and $v$ belong to the edge $e_t$. Therefore, by the definition of the adjacency matrix,
$$
(MM^{\mathsf T})_{uv}
=
\bigl|\{e\in E(\Cnk):u,v\in e\}\bigr|
=
a_{uv},
\qquad u\ne v.
$$
For $u=v$, we have
$$
(MM^{\mathsf T})_{uu}
=
\sum_{t\in\Z_n}m_{u,t}^2
=
\sum_{t\in\Z_n}m_{u,t}
=
d(u)
=
k.
$$
Since the diagonal entries of $A(\Cnk)$ are zero, we have
$$
MM^{\mathsf T}=A(\Cnk)+kI,
$$
and hence
\begin{equation}\label{eq:MMt-new}
A(\Cnk)=MM^{\mathsf T}-kI.
\end{equation}

Thus, if $\mu$ is an eigenvalue of $MM^{\mathsf T}$, then $\mu-k$ is an eigenvalue of $A(\Cnk)$. It remains to determine the eigenvalues of $MM^{\mathsf T}$.

Let $\zeta=e^{2\pi i/n}.$ We claim that $M$ is a circulant matrix. Indeed, for $v,t\in\Z_n$, $v\in e_t$ if and only if $v\in\{t,t+1,\ldots,t+k-1\},$ which, modulo $n$, is equivalent to $t-v\in\{0,-1,\ldots,-(k-1)\}.$ Consequently, if we define
$$
c_d=
\begin{cases}
1,&d\in\{0,-1,\ldots,-(k-1)\}\pmod n,\\
0,&\text{otherwise},
\end{cases}
$$
then $m_{v,t}=c_{t-v}.$ Thus the entries of $M$ depend only on $t-v$ modulo $n$, which is precisely the defining property of a circulant matrix.

For $j=0,1,\ldots,n-1$, consider the vector
$$
u_j
=
\left(
1,\zeta^j,\zeta^{2j},\ldots,\zeta^{(n-1)j}
\right)^{\mathsf T}.
$$
The vectors $u_0,u_1,\ldots,u_{n-1}$ form a basis of $\mathbb C^n$ and, since $M$ is circulant, they form an eigenbasis for $M$. We now compute the eigenvalue corresponding to $u_j$. For $v\in\Z_n$, $$ (Mu_j)_v = \sum_{t\in\Z_n}m_{v,t}\zeta^{tj}.$$ Using $m_{v,t}=c_{t-v}$ and putting $d=t-v$, we obtain $$ (Mu_j)_v = \sum_{d\in\Z_n}c_d\zeta^{(v+d)j} = \zeta^{vj}\sum_{d\in\Z_n}c_d\zeta^{dj}.$$
By the definition of $c_d$, the only nonzero terms occur for $d=0,-1,\ldots,-(k-1).$ Hence
$$
(Mu_j)_v
=
\zeta^{vj}
\sum_{m=0}^{k-1}\zeta^{-mj}.
$$
Therefore, $Mu_j = \widehat c(j)\,u_j,$ where $ \widehat c(j)=\sum_{m=0}^{k-1}\zeta^{-mj}.$ Since $M$ is a real matrix, $M^{\mathsf T}=\overline{M}^{\,*},$ and the vector $u_j$ is also an eigenvector of $M^{\mathsf T}$, with eigenvalue $\overline{\widehat c(j)}$. Thus,
$$ MM^{\mathsf T}u_j = |\widehat c(j)|^2u_j. $$ Hence, the eigenvalues of $MM^{\mathsf T}$ are
$$ |\widehat c(j)|^2 = \left|\sum_{m=0}^{k-1}\zeta^{-mj}\right|^2=\left|\sum_{m=0}^{k-1}\zeta^{mj}\right|^2. $$
 Therefore, by \eqref{eq:MMt-new}, it follows that the eigenvalues of $A(\Cnk)$ are $$ \lambda_j=\left|\sum_{m=0}^{k-1}\zeta^{mj}\right|^2-k, \qquad j=0,1,\ldots,n-1.$$

We now simplify this expression. First, consider $j=1,\ldots,n-1$. Since $\zeta^j\ne1$, the finite geometric-series identity gives
$$
\sum_{m=0}^{k-1}\zeta^{mj}
=
\frac{1-\zeta^{kj}}{1-\zeta^j}.
$$
Therefore,
$$
\left|\sum_{m=0}^{k-1}\zeta^{mj}\right|^2
=
\frac{|1-\zeta^{kj}|^2}{|1-\zeta^j|^2}.
$$
Now, recall the following elementary identity, which will be used in the subsequent steps.
\begin{equation}\label{eq:e^i_identity}
|1-e^{i\theta}|^2
=
(1-e^{i\theta})(1-e^{-i\theta})
=
2-2\cos\theta
=
4\sin^2\left(\frac{\theta}{2}\right).
\end{equation}
Since $\zeta^{kj}=e^{2\pi i kj/n}$ and $\zeta^j=e^{2\pi i j/n},$ by (\ref{eq:e^i_identity}) we obtain
\begin{align*}
|1-\zeta^{kj}|^2
=
4\sin^2\left(\frac{\pi kj}{n}\right)\qquad \text{and} \qquad
|1-\zeta^j|^2
=
4\sin^2\left(\frac{\pi j}{n}\right).
\end{align*}
Thus,
$$
\left|\sum_{m=0}^{k-1}\zeta^{mj}\right|^2
=
\frac{\sin^2(\pi kj/n)}
{\sin^2(\pi j/n)}.
$$
Since $1\le j\le n-1$, we have
$$
\sin\left(\frac{\pi j}{n}\right)\ne0,
$$
so the denominator is nonzero. Consequently,
$$
\lambda_j
=
\frac{\sin^2(\pi kj/n)}
{\sin^2(\pi j/n)}
-k
\qquad \text{for} \qquad j=1,\ldots,n-1.
$$
Finally, consider $j=0$. In this case $\zeta^0=1$, and hence $$ \sum_{m=0}^{k-1}\zeta^{m\cdot0} =\sum_{m=0}^{k-1}1=k.$$ Therefore, $\lambda_0 =|k|^2-k=k^2-k = k(k-1)$. This proves the stated formula for all eigenvalues of $A(\Cnk)$.
\end{proof}

\begin{remark}\label{rem:first-row-new}
The adjacency spectrum of $\Cnk$ given in Theorem~\ref{thm:spectrum-new} is valid for every $2\le k\le n-1$ and does not require the auxiliary condition $n\ge2k-1$. For $d\in\Z_n$, the first row of $A(\Cnk)$ satisfies
$$
a_{0,d}=\max(0,k-d)+\max(0,k-n+d).
$$
When $n\ge2k-1$, this reduces to
$[0,k-1,k-2,\ldots,2,1,0,\ldots,0,1,2,\ldots,k-2,k-1]$ given in Remark 3.6 of \cite{PortugalDelVecchio}.
Moreover,
$$
\mu_j
=\left|\sum_{m=0}^{k-1}\zeta^{mj}\right|^2
=k+2\sum_{m=1}^{k-1}(k-m)
\cos\left(\frac{2\pi mj}{n}\right),
$$
which implies that
$$
\lambda_j
=2\sum_{m=1}^{k-1}(k-m)
\cos\left(\frac{2\pi mj}{n}\right).
$$
From this, we can recover the explicit spectra of $C_n^{[3]},C_n^{[4]}$ and $C_n^{[5]}$ recorded in Proposition 4, 6, and 8 of \cite{PortugalDelVecchio}.
\end{remark}

Now, we are ready to give the proof of Theorem \ref{thm:classification-new}.\\

\noindent \textbf{Proof of Theorem \ref{thm:classification-new}}: First, let us assume that $k=n-1$. Then every pair of distinct vertices in $\Cnk$ belong to exactly $n-2$ hyperedges, and hence
$$A(\Cnk)=(n-2)(\J-\I).
$$
Consequently,
$$
\Spec(\Cnk)
=
\left\{((n-1)(n-2))^1, (-(n-2))^{n-1}\right\}.
$$
Thus, $\Cnk$ is integral for $k=n-1$. For the four remaining pairs, by Theorem~\ref{thm:spectrum-new}, it follows that
$$\Spec(C_4^{[2]})=\{2^1,0^2,(-2)^1\},\qquad \Spec(C_6^{[2]})=\{2^1,1^2,(-1)^2,(-2)^1\},$$
$$\Spec(C_6^{[3]})=\{6^1,1^2,(-2)^1,(-3)^2\}\quad \text{ and } \quad \Spec(C_6^{[4]})=\{12^1,(-1)^2,(-3)^2,(-4)^1\}.
$$
Thus, all four of these hypercycles are integral. It remains to prove that there are no other integral uniform hypercycles for $2\le k\le n-1$. Assume that $\Cnk$ is integral for $2\le k\le n-2.$ Therefore, by Theorem \ref{thm:spectrum-new}, it follows that
$$\mu_j:=\lambda_j+k=\left|\sum_{m=0}^{k-1}\zeta^{mj}\right|^2=\left|\frac{1-\zeta^{kj}}{1-\zeta^j}\right|^2,\qquad j=1,\ldots,n-1,$$ are all integers, and in particular they are rational.

Let $g=\gcd(n,k), \nu=\dfrac{n}{g},$ and $\kappa=\dfrac{k}{g}.$ Then $n=g\nu, k=g\kappa,$ and $\gcd(\kappa,\nu)=1.$ We first show that $g>1$. Suppose, to the contrary, that $\gcd(n,k)=1.$ Then $\mu_1 = \left|\frac{1-\zeta^k}{1-\zeta}\right|^2$ is an integer. Since $\gcd(k,n)=1$, applying Lemma~\ref{lem:unit-new} with $m=n, \eta=\zeta, r=k$, we have  $k\equiv\pm1\pmod n$, which is not possible because $2\le k\le n-2$. Therefore, $g>1.$

Clearly $\nu\ne1$, since $\nu=1$ would imply $g=n$ and hence $n\mid k$, contrary to $k<n$. Thus, $\nu\ge2.$ First, consider the case $\nu=2.$ Then $n=2g.$ Since $g\mid k$, we may write $k=gq$ for some positive integer $q$. Since $k<n=2g$, we must have $q=1$. Hence, $k=g=\dfrac n2.$ Therefore, $\zeta^k=\zeta^{n/2}=-1,$ and using Theorem~\ref{thm:spectrum-new} we have $$
\mu_1=\left|\frac{1-\zeta^k}{1-\zeta}\right|^2=\left|\frac{2}{1-\zeta}\right|^2.$$ Further using the identity
$$|1-e^{i\theta}|^2=4\sin^2\left(\frac{\theta}{2}\right),$$ we have
$$\mu_1=\frac{1}{\sin^2(\pi/n)}=\frac{2}{1-\cos(2\pi/n)}.$$ Since $\mu_1\in\mathbb Z$, it follows that
$$
\cos\left(\frac{2\pi}{n}\right)
=
1-\frac{2}{\mu_1}
\in\Q.
$$
Since $2\pi/n$ is a rational multiple of $\pi$, by Lemma~\ref{lem:niven-new} it follows that $\cos\left(\frac{2\pi}{n}\right) \in \left\{
-1,-\frac12,0,\frac12,1\right\}.$ Since $n\ge3$ is finite and even, the corresponding possibilities give $n\in\{4,6\}.$ As $k=n/2$, we obtain $(n,k)\in\{(4,2),(6,3)\}.$

Now, let us assume that $\nu\ge3.$ We will show that $g=2$ and $\nu$ is odd. Consider the reduction map
$$\rho:(\Z/n\Z)^\times\longrightarrow(\Z/\nu\Z)^\times, \qquad [j]_n\longmapsto[j]_\nu. $$
We claim that $\rho$ is a bijection. First, let us show that $\rho$ is injective.  Assume that $\rho([j_1]_n)=\rho([j_2]_n)$ for two distinct elements $[j_1]_n,[j_2]_n\in(\mathbb Z/n\mathbb Z)^\times$. Thus $j_1\equiv j_2\pmod{\nu},$ and $\gcd(j_1,n)=\gcd(j_2,n)=1.$ We will show that $j_1\equiv j_2\pmod n$. Recall that
$$
\mu_j = \left|\sum_{m=0}^{k-1}\zeta^{mj}\right|^2 =\frac{\sin^2(\pi kj/n)}{\sin^2(\pi j/n)}
\qquad \text{for} \qquad
j=1,\ldots,n-1.
$$
Since $\frac{k}{n}=\frac{g\kappa}{g\nu}=\frac{\kappa}{\nu},$ the congruence $j_1\equiv j_2\pmod{\nu}$ implies $\frac{kj_1}{n}=\frac{\kappa j_1}{\nu}\equiv\frac{\kappa j_2}{\nu}=\frac{kj_2}{n}\pmod 1.$ Hence
$$\sin^2\left(\frac{\pi kj_1}{n}\right)=\sin^2\left(\frac{\pi kj_2}{n}\right).$$
Moreover, these two quantities are nonzero. Indeed, $\gcd(\kappa,\nu)=1$ and $\gcd(j_i,\nu)=1$, so $\gcd(\kappa j_i,\nu)=1$, and therefore $\nu\nmid\kappa j_i$. We now claim that $\mu_{j_1}\ne\mu_{j_2}.$ Suppose, to the contrary, that $\mu_{j_1}=\mu_{j_2}.$ Therefore,
$$\sin^2\left(\frac{\pi j_1}{n}\right)=\sin^2\left(\frac{\pi j_2}{n}\right),$$ which implies that $j_1\equiv\pm j_2\pmod n.$ The congruence $j_1\equiv j_2\pmod n$ is precisely the desired conclusion. Suppose instead that $j_1\equiv-j_2\pmod n.$ Since $\nu\mid n$, reducing this congruence modulo $\nu$ gives $j_1\equiv-j_2\pmod\nu.$ On the other hand, from $\rho([j_1]_n)=\rho([j_2]_n)$ we have $j_1\equiv j_2\pmod\nu.$ Therefore $j_1\equiv-j_1\pmod\nu, $ and hence $2j_1\equiv0\pmod\nu.$ Since $\gcd(j_1,\nu)=1$, it follows that $\nu\mid2$, contradicting $\nu\ge3$. Consequently, $\mu_{j_1}\ne\mu_{j_2}.$ On the other hand, $\Cnk$ is integral, so every $\mu_j=\lambda_j+k$ is an integer and hence rational. Since $\gcd(j_1,n)=\gcd(j_2,n)=1,$ from Lemma~\ref{lem:galois-new} we have $\mu_{j_1}=\mu_{j_2},$ a contradiction. Therefore our assumption that $[j_1]_n$ and $[j_2]_n$ are distinct is impossible. Hence $[j_1]_n=[j_2]_n,$ and thus $\rho$ is injective.

Now, we will show that the map $\rho$ is surjective. To see this let $[a]_\nu\in(\Z/\nu\Z)^\times.$ Choose an integer representative $a$ with $\gcd(a,\nu)=1.$ Let $p$ be a prime divisor of $n$ which does not divide $\nu$. Since $\gcd(\nu,p)=1$, the Chinese remainder theorem allows us to impose simultaneously the congruences $$x\equiv a\pmod\nu \text{ and } x\equiv1\pmod p $$ for all such primes $p$. The resulting residue class $[x]_n$ is coprime to every prime divisor of $n$, hence $[x]_n\in(\Z/n\Z)^\times,$ and it reduces to $[a]_\nu$. Thus, $\rho$ is surjective.


Since \(\rho\) is a bijection between \((\mathbb Z/n\mathbb Z)^\times\) and \((\mathbb Z/\nu\mathbb Z)^\times\), their cardinalities are same, that is, \(\varphi(n)=\varphi(\nu)\).  Since $\nu\mid n$ and $\nu<n,$ Lemma~\ref{lem:phi-new} yields $n=2\nu$ and $\nu$ is odd. Since $n=g\nu,$ we conclude that $g=2.$ Thus $n=2\nu, k=2\kappa, \gcd(\kappa,\nu)=1,$ and $\nu\ge3$ is odd. In particular, $\gcd(k,\nu)=1.$

Let $\eta=\zeta^2.$ Since $\zeta$ is a primitive $n$-th root of unity and $n=2\nu$ with $\nu$ odd, $\eta$ is a primitive $\nu$-th root of unity. Therefore,
$$
\mu_2
=
\left|
\frac{1-\zeta^{2k}}{1-\zeta^2}
\right|^2
=
\left|
\frac{1-\eta^k}{1-\eta}
\right|^2.
$$
Since $\Cnk$ is integral, $\mu_2\in\mathbb Z.$ Again, since $\gcd(k,\nu)=1$, Lemma~\ref{lem:unit-new} applied with $m=\nu, \eta=\zeta^2$ and $r=k$ gives $k\equiv\pm1\pmod\nu.$ Since $0<k<2\nu$ and $k$ is even while $\nu$ is odd, the congruence $k\equiv1\pmod\nu$ can only give $k=\nu+1,$ while $k\equiv-1\pmod\nu$ can only give $k=\nu-1.$ Therefore, $k=\nu\pm1.$

It remains to determine $\nu$. Since $n=2\nu$ and $k=\nu\pm1,$ we have $\frac{\pi k}{n} =\frac{\pi}{2}\pm\frac{\pi}{n}.$ Therefore, $\sin^2\left(\frac{\pi k}{n}\right) =\cos^2\left(\frac{\pi}{n}\right).$ Using the formula for $\mu_1$, we obtain
$$
\mu_1
=
\frac{\sin^2(\pi k/n)}
{\sin^2(\pi/n)}
=
\frac{\cos^2(\pi/n)}
{\sin^2(\pi/n)}
=
\cot^2\left(\frac{\pi}{n}\right).
$$
Using the identity
$\cot^2 x=\frac{1+\cos(2x)}{1-\cos(2x)}$, we have
$\mu_1=\frac{1+\cos(2\pi/n)}{1-\cos(2\pi/n)}.$
Since $\mu_1$ is an integer, we have $\cos\left(\frac{2\pi}{n}\right) =\frac{\mu_1-1}{\mu_1+1}
\in\Q.$ Again, $2\pi/n$ is a rational multiple of $\pi$, so by Lemma~\ref{lem:niven-new} it follows that $\cos\left(\frac{2\pi}{n}\right)
\in\left\{-1,-\frac12,0,\frac12,1\right\}.$ Since $n=2\nu\ge6,$ the only possible value of $n$ is $n=6.$ Thus, $\nu=3,$ and hence
$k=\nu\pm1\in\{2,4\}.$ This gives $(n,k)=(6,2)$ and $(n,k)=(6,4).$ Thus, by combining these with the cases obtained when $\nu=2$, namely $(4,2)$ and $(6,3),$ we conclude that, under the assumption $2\le k\le n-2$, the only integral hypercycles are
$$
C_4^{[2]},\quad
C_6^{[2]},\quad
C_6^{[3]}\quad \text{and} \quad
C_6^{[4]}.
$$
Together with the already established case $k=n-1$, this completes the proof.


\end{document}